\definecolor{cyan(process)}{rgb}{0.0, 0.72, 0.92}
\theoremstyle{plain}
\newtheorem{theorem}{Theorem}[section]
\newtheorem{prop}[theorem]{Proposition}
\newtheorem*{theorem*}{Theorem}
\newtheorem*{prop*}{Proposition}
\theoremstyle{definition}
\newtheorem{definition}[theorem]{Definition}
\newtheorem{remark}[theorem]{Remark}
\newtheorem{lemma}[theorem]{Lemma}
\newtheorem*{ex}{Example}
\newcommand{\Z}{\mathbb{Z}}
\title{Palindromic sequences of the Markov spectrum}
\author{Matty van Son}
\date{01 September 2018}
\address{Matty van-Son\\
University of Liverpool\\
Mathematical Sciences Building\\
Liverpool L69 7ZL, United Kingdom
} \email{sgmvanso@liverpool.ac.uk}
\keywords{Markov sequences, Stern's diatomic series, Stern's diatomic sequence, palindromic sequences, evenly palindromic}
\begin{document}

	\begin{abstract}
		We study the periods of Markov sequences, which are derived from the continued fraction expression of elements in the Markov spectrum. This spectrum is the set of minimal values of indefinite binary quadratic forms that are specially normalised. We show that the periods of these sequences are palindromic after a number of circular shifts, the number of shifts being given by Stern's diatomic sequence.
	\end{abstract}
	
	\maketitle
	
	\section*{Introduction}
	In this paper we state and prove a general result on the construction of palindromic sequences. These include sequences relating to the Markov spectrum. The \textit{Markov spectrum} is the set of numbers
	\begin{equation} \label{eq inf}
	\inf\limits_{\Z^2\setminus \{(0,0)\}}\bigg|\frac{\sqrt{\Delta}}{f}\bigg|
	\end{equation}
	for all binary quadratic forms $f$ with positive discriminant $\Delta(f)$.
	
	A. Markov showed in his papers \cite{mar1,mar2} that for any element of the Markov spectrum less than $3$ there exists a sequence of positive integers $(a_1,\ldots,a_{2n})$ such that 
	\begin{equation} \label{eq PI}
	\inf\limits_{\Z^2\setminus \{(0,0)\}}\bigg|\frac{\sqrt{\Delta}}{f}\bigg|=[(a_1,a_2,\ldots,a_{2n})]+[0;\overline{(a_1,\ldots,a_{2n})}],
	\end{equation}
	where $[(a_1,a_2,\ldots,a_{2n})]$ is the infinite continued fraction with~period $(a_1,a_2,\ldots,a_{2n})$. In this paper we denote the reverse of the sequence $(a_1,\ldots,a_{2n})$ by $\overline{(a_1,\ldots,a_{2n})}$. Equation (\ref{eq PI}) is known as the \textit{Perron identity}, going back to \cite{per1}. 
	
	It is known, see for example the books by T. Cusick and M. Flahive \cite{cus1}, and M. Aigner \cite{aigner}, that the numbers $a_i$ satisfy the following
	\begin{itemize}
	\item $a_i\in\{1,2\}$
	\item $a_1=a_2=2$, $a_{2n}=a_{2n-1}=1$
	\item The subsequence $w{=}(a_3,\ldots,a_{2n-2})$ is palindromic, i.e. $w=\overline{w}$.
	\end{itemize}
	One can find work on Markov numbers in relation to other branches of mathematics in the following articles: \cite{oleg1}, \cite{series}, \cite{reutenauer}, \cite{oleg&matty}, \cite{veselov&spalding}.
	
	The sequences for which expression (\ref{eq inf}) is less than $3$, henceforth called \textit{Markov sequences}, may be constructed by concatenation of the sequences $(2,2)$, and $(1,1)$ (see Definition \ref{def ms} below). This follows as a corollary of the work of H.~Cohn~\cite{cohn}, specifically found in \cite[Theorem~4.7]{aigner}. 
	
	We show in Theorem \ref{thm main} that any sequence constructed in the same way as Markov sequences  are \textit{evenly palindromic}, that is, after some number of circular shifts the sequence is palindromic. The number of circular shifts is given by Stern's diatomic sequence, an exposition of which can be found in the paper of I. Urbiha, \cite{urbiha}.
	
	In the forthcoming paper \cite{oleg&matty2}, we use Theorem \ref{thm main} to show that there is a generalisation of Markov numbers coming from the graph structure in Definition \ref{def ms}.

	\subsection*{Organisation of the Paper}
	In Section \ref{sec1} we give a background for Markov sequences, and give the necessary definitions for our main result, Theorem \ref{thm main}.
	
	In Section \ref{s lem and proof} we prove Theorem \ref{thm main}.
	
	\noindent \textbf{Acknowledgements.} The author is grateful to O. Karpenkov for his constant attention to this work.

	\section{Some history and background} \label{sec1}
	In this section we give the necessary definitions and background for the main result of the paper, Theorem \ref{thm main}. 
	
	\subsection{The Markov spectrum}
	In this subsection we define the Markov spectrum in terms of binary quadratic forms and sequences of positive integers.
	We start with the following definition.
	\begin{definition} \label{def ms1}
		Let $f$ be a binary quadratic form with positive discriminant $\Delta$. The \textit{Markov element of $f$} is defined to be
		\[
		M(f)=\inf\limits_{\Z^2\setminus \{(0,0)\}}\left|\frac{f}{\sqrt{\Delta}} \right|.
		\]
		The \textit{Markov spectrum} is the set of values $1/M(f)$ for all such forms $f$.
	\end{definition}
	For a sequence of positive integers $a_1,\ldots,a_n$, let 
	\[
	[a_1;a_2:\ldots:a_n]
	\]
	denote the continued fraction of $a_1,\ldots,a_n$.
	We give an alternative definition of the Markov spectrum.
	\begin{definition} \label{def ms2}
		Let $A$ be a doubly infinite sequence of positive integers
		\[
		A=\ldots,a_{-2},a_{-1},a_0,a_1,a_2,\ldots.
		\]
		Define $M(A)$, the \textit{Markov element of $A$}, by
		\begin{equation} \label{pi}
		\frac{1}{M(A)}=\inf\limits_{i\in\Z}\left(a_i+[0;a_{i+1}:a_{i+2}:\ldots]+[0;a_{i-1}:a_{i-2}:\ldots]\right).
		\end{equation}
		The right hand side of Equation (\ref{pi}) is known as the \textit{Perron identity}. The set of values $1/M(A)$ for all such sequences $A$ is called the \textit{Markov spectrum}.
	\end{definition}
	
	\begin{remark}
		Definitions \ref{def ms1} and \ref{def ms1} are equivalent, see \cite{per1}. The sequences $A$ for which $M(A)>1/3$ are purely periodic and consist solely of the integers $1$, and $2$. We refer to these sequences as \textit{Markov sequences}.
	\end{remark}
	
	\subsection{Graph structure of Markov sequences}
	We give an alternative definition of Markov sequences.
	\begin{definition}
		Let $\Z^{\infty}$ be the set of finite sequences of integer elements. Consider the binary operation $\oplus$ on $\Z^{\infty}$ defined as
		\[
		(a_1,\ldots,a_n)\oplus (b_1,\ldots,b_m)=(a_1,\ldots,a_n,b_1,\ldots,b_m).
		\]
		We call this the \textit{concatenation} of sequences $(a_1,\ldots,a_n)$ and $(b_1,\ldots,b_m)$.
		Let also for $A$, $B$, $C\in\Z^{\infty}$
		\[
		\mathcal{L}_{\oplus}(A,B,C)=(A,A\oplus B,B),\ \ 	\mathcal{R}_{\oplus}(A,B,C)=(B,B\oplus C,C).
		\]
	\end{definition}

	\begin{definition} \label{def ms}
		We define $\mathcal{G}(\Z^{\infty},\oplus,x)$ to be the directed graph whose vertices are elements in $({\Z^{\infty}})^3$, and containing the vertex $x$. The vertices $v$, $w\in{\Z^{\infty}}^3$ are connected by an edge $(v,w)$ if either
		\[
		w=\mathcal{L}_{\oplus}(v),\ \ \ \mbox{or}\ \ \ w=\mathcal{R}_{\oplus}(v).
		\]
		
		For $A$, $B\in\Z^{\infty}$ we write
		\[
		\mathcal{G}(\Z^{\infty},\oplus,(A,A\oplus B,B)=\mathcal{G}_{A,B}.
		\]
	\end{definition}
	\begin{remark}
		The graph $\mathcal{G}_{(1,1),(2,2)}$ is called the \textit{graph of general Markov sequences} and contains all Markov sequences, see \cite{cus1,oleg&matty2}. 
	\end{remark}

	\begin{definition}
		Let $v$ be the vertex $(A,A\oplus B,B))\in\mathcal{G}_{A,B}$. Let $w$ be a vertex in $\mathcal{G}_{A,B}$. We say that $(\alpha_1,\ldots,\alpha_{2n})$ is a path from $v$ to $w$ if
		\[
		w=\mathcal{L}_{\oplus}^{\alpha_{2n}}\mathcal{R}_{\oplus}^{\alpha_{2n-1}}\ldots \mathcal{L}_{\oplus}^{\alpha_2}\mathcal{R}_{\oplus}^{\alpha_1}(v).
		\]
		
		We define the $N$-th level in $\mathcal{G}_{A,B}$ to be all vertices $w$ such that the path $(\alpha_1,\ldots,\alpha_{2n})$ from $v$ to $w$ satisfies
		$
		\sum_{i=1}^{2n}\alpha_i=N.
		$
	\end{definition}
	
	We give an ordering for the vertices in each level of $\mathcal{G}_{A,B}$.
	\begin{definition}
		For positive integers $n$, $m$, $\alpha_1,\ldots,\alpha_{2n}$, $\beta_1,\ldots,\beta_{2m}$ satisfying
		$\sum_{i=1}^{2n}\alpha_i=\sum_{i=1}^{2m}\beta_i$, let $w_1$, $w_2$ be two vertices in $\mathcal{G}_{A,B}$ with
		\[
		w_1=\mathcal{L}_{\oplus}^{\alpha_{2n}}\mathcal{R}_{\oplus}^{\alpha_{2n-1}}\ldots \mathcal{L}_{\oplus}^{\alpha_2}\mathcal{R}_{\oplus}^{\alpha_1}(v)\ \mbox{and}\ w_2=\mathcal{L}_{\oplus}^{\beta_{2m}}\mathcal{R}_{\oplus}^{\beta_{2m-1}}\ldots \mathcal{L}_{\oplus}^{\beta_2}\mathcal{R}_{\oplus}^{\beta_1}(v).
		\]
		Define an ordering of vertices by
		\[
		w_1\prec w_2 
		\]
		if either
		\[
		\begin{aligned}
		\alpha_i&=\beta_i, \mbox{ for}\ i=1,\ldots,2k{-}1,\ k<m,n\ \mbox{and } \alpha_{2k}>\beta_{2k},\ \mbox{or}\\
		\alpha_i&=\beta_i, \mbox{ for}\ i=1,\ldots,2k,\ k<m,n\ \mbox{and } \alpha_{2k+1}<\beta_{2k+1}.
		\end{aligned}	
		\]
	\end{definition}
	\begin{definition}
		Let the pair $(\mathcal{G}_{A,B},\prec )$ be the graph $\mathcal{G}_{A,B}$ where each level $n$ is ordered 
		\[
		w_1\prec w_2\prec \ldots\prec w_{2^n}.
		\]
	\end{definition}
	
	We define the sequence $(S(i))_{i=0}^{\infty}$.
	\begin{definition} \label{def first s}
		For two sequences $A$, and $B$ let  
		\[
		S(0)=A,\ \ S(1)=B\ \ S(2)=A\oplus B.
		\]
		For $n>1$, and $1\leq i\leq 2^{n-1}$ let $S(2^{n-1}{+}i)$ be the central element of the $i$-th vertex in the $n$-th level of the ordered graph $(\mathcal{G}_{A,B},\prec )$. We call $(S(i))$ the \textit{ordered Markov sequences for $A$ and $B$.}
		
		When we want to specify the sequences $A$, and $B$ we write
		\[
		S_{A,B}(n).
		\]
	\end{definition}
	
	\begin{ex}
		We have 
		\[
		S_{(a,a),(b,b)}(14)=(a,a,b,b,a,a,b,b,b,b,a,a,b,b,b,b).
		\]
		For $(a,b)=(1,2)$ we have for all $i\geq1$ that $M(S_{(a,a),(b,b)}(i))>1/3$.
	\end{ex}
	
	\begin{definition}
		Let $\Lambda=(\lambda_1,\ldots,\lambda_{2n})$, $\Gamma=(\gamma_1,\ldots,\gamma_{2n+1})$. We call $\Lambda$ and $\Gamma$ \textit{evenly palindromic} and \textit{oddly palindromic} respectively if there exists $k_1$, $k_2\in\Z$ such that for all $i\in\Z$ we have
		\[
		\begin{aligned}
		\lambda_{k_1+i\mod 2n}&=\lambda_{k_1-i-1\mod 2n},\\
		\gamma_{k_2+i+1\mod 2n+1}&=\gamma_{k_2-i-1\mod 2n+1}.
		\end{aligned}
		\]
	\end{definition}
	\begin{definition} \label{def sds}
		Let $d_0=0$ and $d_1=1$, and for all positive integers $n>1$ set
		\[
		d_{2n}=d_n,\ \ \ \		d_{2n{-}1}=d_n{+}d_{n{-}1}.
		\]
		The sequence $(d_n)_{n\geq0}$ is called \textit{Stern's diatomic sequence}.
	\end{definition}
	
	We give the main theorem of this paper.
	\begin{theorem} \label{thm main}
		Let $A$ and $B$ be two palindromic sequences of positive integers. Let $n$ be a positive integer, and let $N$ be the sum of powers of $A$'s and $B$'s in $S_{A,B}(n)$. Let $\Lambda_i\in\{A,B\}$ such that
		\[
		S_{A,B}(n)=\Lambda_1\ldots\Lambda_N.
		\]
		Then the following sequences are palindromic
		\[
		\begin{cases}
		\Lambda_{\lceil d_n/2\rceil}\Lambda_{\lceil d_n/2\rceil+1} \ldots\Lambda_N\Lambda_1\ldots \Lambda_{\lceil d_n/2\rceil-1}, & d_n \mbox{ even},\\
		\lceil\Lambda_{\lceil d_n/2\rceil}\rceil\Lambda_{\lceil d_n/2\rceil+1} \ldots\Lambda_N\Lambda_1\ldots \Lambda_{\lceil d_n/2\rceil-1}\lfloor\Lambda_{\lceil d_n/2\rceil}\rfloor, & d_n \mbox{ odd}.
		\end{cases}
		\]
	\end{theorem}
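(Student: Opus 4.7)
The plan is to proceed by strong induction on $n$, exploiting the recursive construction of the ordered graph $(\mathcal{G}_{A,B}, \prec)$. The base cases $n = 0, 1, 2$ follow directly from Definition \ref{def first s}: $S(0) = A$ and $S(1) = B$ are palindromic by hypothesis (matching $d_0 = 0$ and $d_1 = 1$), and $S(2) = A \oplus B$ becomes palindromic after the unit shift prescribed by $d_2 = 1$.

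For the inductive step, we observe that any $S(n)$ with $n > 2$ is the central element of a vertex $w$ at some level $k \geq 2$, obtained from a parent vertex $v = (X, X \oplus Y, Y)$ by either $\mathcal{L}_{\oplus}$ or $\mathcal{R}_{\oplus}$. Writing $S(m) = X \oplus Y$ for the parent's central element (with $m < n$), the central element of $w$ equals $X \oplus S(m)$ in the $\mathcal{L}_{\oplus}$-case and $S(m) \oplus Y$ in the $\mathcal{R}_{\oplus}$-case. Because every vertex retains the form ``middle $=$ left $\oplus$ right'', the outer blocks $X$ and $Y$ themselves appeared earlier as central elements of ancestor vertices, so the inductive hypothesis applies to them as well.

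The heart of the argument is to track how the palindromic axis of $\Lambda_1 \cdots \Lambda_N$ moves under such a concatenation. Using the known axis positions of the parent and of the outer block given by the inductive hypothesis, one computes the new axis of $S(n)$ by aligning the inherited axis of $S(m)$ with the appended outer block. The ordering $\prec$ assigns to the two children of the index-$m$ vertex the Stern-style indices $2m - 1$ and $2m$, and an explicit calculation shows that the new shift positions satisfy $d_{2m} = d_m$ and $d_{2m - 1} = d_m + d_{m-1}$, matching Definition \ref{def sds} exactly.

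The main obstacle lies in the ensuing case analysis: the four combinations of ``$\mathcal{L}_{\oplus}$ vs $\mathcal{R}_{\oplus}$ child'' and ``even vs odd $d_m$'' each demand a slightly different re-alignment, and the ``$d_m$ odd'' sub-cases are the most delicate because the axis of $S(m)$ falls inside a single block $\Lambda_i$. The concatenation must then properly reassemble the halves $\lceil \Lambda_i \rceil$ and $\lfloor \Lambda_i \rfloor$ at the cyclic boundary, and this is precisely where the assumed palindromicity of $A$ and $B$ themselves (not merely of the $\Lambda$-sequence) becomes essential, since it guarantees that splitting $\Lambda_i$ at its midpoint preserves symmetry.
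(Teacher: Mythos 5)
Your overall skeleton (strong induction along the tree, matching the new shift against the Stern recursion $d_{2m}=d_m$, $d_{2m-1}=d_m+d_{m-1}$) is the same as the paper's, but the inductive step as you describe it has a genuine gap. You propose to obtain even palindromicity of the child $X\oplus S(m)$ (or $S(m)\oplus Y$) by ``aligning the inherited axis of $S(m)$ with the appended outer block'', using only that $S(m)$ and the outer block are each evenly palindromic. That inference is false in general: the concatenation of two evenly palindromic sequences need not be evenly palindromic. For instance $(2,2,1,1)$ and $(3,3,4,4)$ are each evenly palindromic, yet no circular shift of $(2,2,1,1,3,3,4,4)$ is a palindrome. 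What actually makes the step go through is the self-similarity of the construction: the outer block $X$ is a prefix of the parent's central element $S(m)=X\oplus Y$, so the child is $S(m)$ with an extra copy of its own prefix prepended. The paper exploits this by collapsing a maximal chain of same-direction moves into the decomposition $S(k)=S\circ a^*(k_{i+1}{-}1)\oplus S(k_{i+1})^{i}$ (Lemmas \ref{lem4} and \ref{lem5}), so that the reflection about the proposed axis can be checked across the boundary as a congruence modulo $|S(k_{i+1})|$ (Equation (\ref{eq case1})), using $|S\circ a(k)|=2d_k$ from Lemma \ref{lem2} and the inequalities of Lemma \ref{lem6}. Your one-step-at-a-time version could in principle be repaired by strengthening the induction hypothesis to record where the axis of the outer block sits relative to the axis of the parent, but as written the decisive verification is asserted (``an explicit calculation shows'') rather than performed, and that calculation is the substance of the proof.

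Two smaller points. The paper disposes of half of the case analysis you anticipate (the $\mathcal{L}_{\oplus}$ versus $\mathcal{R}_{\oplus}$ children) by a symmetry of the tree under swapping $A\leftrightarrow B$ and reversing (Lemma \ref{lem new} and Remark \ref{rem new}); you would need either that symmetry or a second computation. On the other hand, your decision to run the induction directly at the block level $\Lambda_1\cdots\Lambda_N$ is a reasonable variant of the paper's route, which first proves the element-level statement for $A=(a,a)$, $B=(b,b)$ in Proposition \ref{lem main} and only afterwards substitutes general palindromic blocks; and your observation that palindromicity of $A$ and $B$ themselves is needed precisely to split $\Lambda_{\lceil d_n/2\rceil}$ into $\lceil\cdot\rceil$ and $\lfloor\cdot\rfloor$ at the cyclic boundary when $d_n$ is odd is correct and matches the paper's final step.
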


	\section{Proof of Theorem \ref{thm main}} \label{s lem and proof}
	In this section we prove Theorem \ref{thm main}. We start by stating Proposition \ref{lem main}, which deals with the majority of the proof. Subsections \ref{ssec lem main} through \ref{ssec proof} deal with proving this lemma, while the final proof of Theorem \ref{thm main} is in Subsection \ref{ssec final proof}.
	
	\subsection{Alternative definition for $S(n)$} \label{ssec lem main}
	We state Proposition \ref{lem main}, which is central to our proof of Theorem \ref{thm main}.	We then give an alternative definition for the sequences $(S(n))$ in Proposition \ref{seq def}, defining each $S(n)$ by concatenations of previous terms in $(S(n))$. We start by defining circular shifts of Markov sequences.
	
	\begin{definition}
		Let $A=(a_0,\ldots,a_n)$ be a sequence of positive integers, and for each $0\leq i<n$ define the operation 
		\[
		C_i(A)=(a_i,\ldots,a_n,a_1,\ldots,a_{i-1}).
		\]
		Then $C_i(A)$ is called the \textit{$i$-th circular shift of $A$}.
	\end{definition}
	
	\begin{prop} \label{lem main}
		Every sequence $S(i)$ is evenly palindromic. Moreover, we have that the sequence
		\[
		C_{d_i}(S(i))
		\]
		is palindromic, where $d_i$ is the $i$-th element in Stern's diatomic sequence.
	\end{prop}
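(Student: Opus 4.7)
My plan is to prove the proposition by strong induction on $i$, exploiting the tree structure of $\mathcal{G}_{A,B}$ together with the recursive identity for Stern's diatomic sequence. The base cases $i \in \{0, 1, 2\}$ should follow by direct verification: $S(0) = A$ and $S(1) = B$ are palindromes by hypothesis, so the required shifts (trivial for $d_0 = 0$) are easily handled. For $S(2) = A \oplus B$, the identity $\overline{A \oplus B} = \overline{B} \oplus \overline{A} = B \oplus A$ exhibits the reverse of $S(2)$ as its cyclic rotation by $|A|$ positions, so $S(2)$ is a circular palindrome; a routine computation then confirms that the shift $d_2 = 1$ produces a linear palindrome.

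For the inductive step at $i \geq 3$, I first rely on the forthcoming Proposition \ref{seq def} to decompose $S(i) = S(j) \oplus S(k)$, where $j, k < i$ are the indices of the two non-middle components of the tree vertex $(S(j), S(i), S(k))$ giving rise to $S(i)$. Depending on whether $S(i)$ was obtained via $\mathcal{L}_{\oplus}$ or $\mathcal{R}_{\oplus}$, exactly one of $S(j), S(k)$ is the middle of the parent vertex and the other is the outer block inherited from that parent. By the inductive hypothesis, both $S(j)$ and $S(k)$ are evenly palindromic with palindromic shifts $d_j$ and $d_k$.

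The crux of the argument will be a concatenation lemma: when $X = S(j)$ and $Z = S(k)$ arise together in a triple of $\mathcal{G}_{A,B}$, their circular palindromic structures combine so that $Y = X \oplus Z$ is evenly palindromic, with its palindromic shift computable from $d_j$, $d_k$, $|X|$, and $|Z|$. Each of $X, Z$, viewed as a circular word, carries two axes of symmetry; I will show that exactly one axis from each extends to an axis of $Y$, the key point being that the boundary between the two blocks aligns with the palindromic structure inherited from the common parent vertex. The final step is to verify that the resulting shift matches $d_i$ by recognising the formula as an instance of Stern's recursion $d_{2n} = d_n$, $d_{2n-1} = d_n + d_{n-1}$, via the standard correspondence between the binary indexing of $\mathcal{G}_{A,B}$ and the Calkin--Wilf enumeration underlying $(d_n)$.

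The principal obstacle I anticipate is the concatenation lemma itself. Two arbitrary circular palindromes do not, in general, concatenate into a circular palindrome; the proof must therefore leverage the boundary compatibility imposed by the tree structure. I expect this to require a case split on the direction ($\mathcal{L}_{\oplus}$ versus $\mathcal{R}_{\oplus}$) along which $S(i)$ branches from its parent, combined with careful modular arithmetic on axis positions relative to the lengths $|X|$ and $|Z|$, in order to confirm both that the correct pair of axes extends and that the resulting palindromic shift is precisely $d_i$.
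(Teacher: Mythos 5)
Your overall skeleton --- strong induction, base cases $i\le 2$, decomposition of $S(i)$ via Proposition \ref{seq def}, and bookkeeping of the shift through Stern's recursion --- matches the paper's strategy in outline, and your base-case verifications are fine. But the entire content of the proof is concentrated in your ``concatenation lemma,'' and you have not supplied it; you have only named it as the principal obstacle and predicted what its proof would involve. As you yourself note, two circular palindromes do not in general concatenate to a circular palindrome, so the statement ``exactly one axis from each extends to an axis of $Y$'' needs an argument, and that argument cannot come from the inductive hypothesis alone (which only tells you each factor is evenly palindromic, not how their axes sit relative to the concatenation boundary). The missing input is the precise containment relation between the two factors: in $S(2j)=S(j)\oplus S\circ a(j)$ the appended block $S\circ a(j)$ already tiles the end of $S(j)$. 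The paper makes this explicit by unwinding the recursion further (Lemmas \ref{lem4} and \ref{lem5}) to write $S(k)=S\circ a^*(k_{i+1}{-}1)\oplus S(k_{i+1})^{i}$, so that $S(k)$ differs from the already-palindromic $S(k_2)$ by one extra copy of a block that is a repeated suffix of $S(k_2)$; only a half-block of new index relations then needs checking, which reduces to the congruence in Equation (\ref{eq case1}) and is verified using $|S\circ a(k)|=2d_k$ (Lemma \ref{lem2}) together with $d_{2n-1}=d_n+d_{n-1}$, plus the inequality of Lemma \ref{lem6} to ensure the known symmetric range actually reaches past the prefix. Your appeal to ``the standard correspondence with the Calkin--Wilf enumeration'' is standing in for exactly this identity and computation.

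A second, smaller omission: the paper does not run the index computation twice. The odd-index case is reduced to the even one by a reversal symmetry of the construction ($S(k)=\overline{f(S(m))}$ for a mirror index $m$, Lemma \ref{lem new} and Remark \ref{rem new}), which depends on the symmetry of Stern's sequence ($d_{k'}=d_{k''}$). Your plan instead proposes a case split on $\mathcal{L}_\oplus$ versus $\mathcal{R}_\oplus$ with separate modular arithmetic in each branch; that could in principle be made to work, but it doubles the hard step rather than removing it. To complete your proof you would need, at minimum: (a) a precise statement and proof that $S\circ a(j)$ occurs as a suffix (or prefix) power of $S(j)$; (b) the identity $|S\circ a(k)|=2d_k$; and (c) the explicit verification that the surviving axis of symmetry lands at position $d_i$. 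As it stands, the proposal is a plausible outline with the decisive step unproven.
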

	
	\begin{ex}
		$S_{(a,a),(b,b)}(3)=(a,a,a,a,b,b)$, and $d_3=2$. Then 
		\[
		C_2(S_{(a,a),(b,b)}(3))=(a,a,b,b,a,a).
		\]
	\end{ex}
	
	We define a sequence $(a(j))_{j\geq0}$ which simplifies notation of $(S(j))$.
	\begin{definition} \label{def a}
		Let $a(1){=}a(2){=}1$, and for all positive integers $j>1$ set
		\[
		a(2j)=a(j)\ \ \ \ 		a(2j{-}1)=j.
		\]
		The sequence $(a(j))_{j\geq0}$ is A003602 in \cite{seq}.
		Denote by $(a^*(j))$ the sequence defined 
		\[
		a^*(j)=
		\begin{cases}
		a(j), & \mbox{if } j>1,\\
		0, & \mbox{if } j=1.\\
		\end{cases}
		\]
		For the $a(j)$-th element in the sequence $(S(n))$, we write $S\circ a(j)$.
	\end{definition}

	\begin{ex}
		The first 10 elements of $(a(j))$ and $(d_j)$ are given
		\[
		\begin{aligned}
		(a(j))_1^{10}&=(1, 1, 2, 1, 3, 2, 4, 1, 5, 3),\\
		(d_j)_0^9&=(0, 1, 1, 2, 1, 3, 2, 3, 1, 4).
		\end{aligned}
		\]
		Figure \ref{fig sds} shows the symmetry in Stern's diatomic sequence.
	\end{ex}
	\begin{figure} 
		\includegraphics{sds-2.mps}
		\caption{The first entries in Stern's diatomic sequence.} \label{fig sds}
	\end{figure}
	
	\begin{prop} \label{seq def}
		Let $a>b$ be positive integers, and let
		\[
		S(0)=A,\ \ S(1)=B,\ \ S(2)=A\oplus B.
		\]
		The following definitions of the sequence $(S(j))_{j>2}$ are equivalent.
		\begin{itemize}
			\item[(i)] For $n>1$ and $1\leq i\leq 2^{n-1}$, define $S(2^{n-1}{+}i)$ to be the central element of the $i$-th vertex in the $n$-th level of the graph
			\[
			\mathcal{G}(\Z^{\infty},\oplus,(A,A\oplus B,B).
			\]
			\item[(ii)] Define
			\[
			\begin{aligned}
			S(2j)&=S(j)\oplus S\circ a(j),\ \mbox{and}\\ 
			S(2j{-}1)&=S\circ a^*(j{-}1)\oplus S(j).
			\end{aligned}
			\]
		\end{itemize}
	\end{prop}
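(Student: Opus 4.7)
I would proceed by induction on $j$, with base cases $j\leq 4$ handled by direct computation (both definitions yield $S(3)=A\oplus A\oplus B$ and $S(4)=A\oplus B\oplus B$ as the centrals of the two children of the root). For $j>4$ I would write $j=2^{n-1}+i$ with $1\leq i\leq 2^{n-1}$, so definition~(i) identifies $S(j)$ with the central of the $i$-th vertex $v$ at level $n$ of $(\mathcal{G}_{A,B},\prec)$. A preliminary observation, read off from the parity-based comparison defining $\prec$, is the following sibling structure: the $i$-th vertex at level~$n$ is the $\mathcal{L}_\oplus$-child of the $\lceil i/2\rceil$-th vertex $v'$ at level $n-1$ when $i$ is odd, and its $\mathcal{R}_\oplus$-child when $i$ is even. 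By the inductive hypothesis the central of $v'$ is $S(j')$ where $j'=\lceil j/2\rceil$, and writing $v'=(X,X\oplus Z,Z)$ the centrals of its two children are $X\oplus S(j')$ and $S(j')\oplus Z$ respectively.

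The task then reduces to identifying the first coordinate $X$ and third coordinate $Z$ of $v'$ in terms of earlier terms of the sequence $(S(k))$. I would prove as a key lemma, by a nested induction on the level of $v'$, that
\[
X=S(a^*(j'-1))\qquad\text{and}\qquad Z=S(a(j')).
\]
When $v'$ is the $\mathcal{L}_\oplus$-child of a vertex $v''$ whose central is $S(j'')$, one has $j'=2j''-1$, the first coordinate $X$ is inherited unchanged from $v''$, and $Z=S(j'')$; these transitions are matched by the identities $a(2j''-1)=j''$ and $a^*(2(j''-1))=a^*(j''-1)$. The $\mathcal{R}_\oplus$ case is dual, using $a(2j'')=a(j'')$ and $a^*(2j''-1)=j''$. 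Substituting this lemma into the sibling formulas above yields exactly the recursions of~(ii): $S(2j'-1)=S(a^*(j'-1))\oplus S(j')$ and $S(2j')=S(j')\oplus S(a(j'))$, completing the induction on $j$.

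The main obstacle I anticipate is the sibling-structure claim for the ordering $\prec$, which demands careful bookkeeping with the alternating-block encoding $(\alpha_1,\ldots,\alpha_{2n})$ of paths and the parity-dependent comparison rules, in particular verifying that sibling pairs appear consecutively with the $\mathcal{L}_\oplus$-child first. Once that is established the remainder is a clean double induction; the only delicate point is the leftmost $\mathcal{L}_\oplus$-descent chain from the root, where the boundary behaviour of $a^*$ on the ``powers of two'' branch is needed to pin the first coordinate to $A=S(0)$.
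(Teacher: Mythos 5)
Your proposal follows essentially the same route as the paper: induction on the levels of the tree, reduced to index-bookkeeping identities for $a$ and $a^*$, and your ``key lemma'' identifying the outer coordinates of the vertex with central $S(j)$ as $S(a^*(j-1))$ and $S(a(j))$ is precisely the content of the paper's Lemma \ref{lem must show}, just stated more transparently. The boundary point you flag on the leftmost $\mathcal{L}_\oplus$-branch is genuinely delicate --- with the paper's literal definition one has $a^*(2)=a(2)=1$ while the first coordinate there must remain $S(0)=A$, so the inheritance identity $a^*(2(j-1))=a^*(j-1)$ fails at $j=2$; the paper's proof glosses over this, so treat it as a needed adjustment to the definition of $a^*$ (vanishing on all powers of $2$) rather than a defect of your strategy.
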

	\begin{remark}
		Proposition \ref{seq def} gives us an alternative definition for the sequences $(S(n))$.
	\end{remark}
	For Proposition \ref{seq def} we first prove the following lemma.
	\begin{lemma} \label{lem must show}
		For $m=2k>2$ the following equations hold
		\[
		\begin{aligned}
		S(2^{n-2}{+}k)&=S( a^*(2^{n-1}{+}m{-}1)),\	S(2^{n-1}{+}2k)=S(2^{n-1}{+}m),\\
		S(2(2^{n-2}{+}k))&=S(2^{n-1}{+}m),\	S( a(2^{n-2}{+}k))=S( a(2^{n-1}{+}m)).
		\end{aligned}
		\]
		For $m=2k{-}1>2$ the following equations hold
		\[
		\begin{aligned}
		S( a^*(2^{n-2}{+}k{-}1))&=S( a^*(2^{n-1}{+}m{-}1)),\ 	S(2^{n-2}{+}k)=S( a(2^{n-1}{+}m)).\\
		S(2(2^{n-2}{+}k){-}1)&=S(2^{n-1}{+}m),\	\ S(2(2^{n-2}{+}k){-}1)=S(2^{n-1}{+}m),
		\end{aligned}
		\]
	\end{lemma}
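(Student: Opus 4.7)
The plan is to observe that each of the eight equations in the lemma asserts $S(X) = S(Y)$ for two explicit integer indices $X$ and $Y$, and to verify in every case that $X = Y$ as integers. Once this is done the lemma becomes a string of tautologies, and no information about the sequences $S(\cdot)$ themselves is ever needed. All of the work reduces to bookkeeping with the recursion $a(2j) = a(j)$, $a(2j-1) = j$ of Definition~\ref{def a}, together with the observation that $a^*(j) = a(j)$ whenever $j > 1$.

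For the even case $m = 2k$, I would begin with the identity $2^{n-1} + m = 2^{n-1} + 2k = 2(2^{n-2} + k)$, which immediately settles the two middle equations. Writing $2^{n-1} + m - 1 = 2(2^{n-2}+k) - 1$ and applying $a(2j - 1) = j$ with $j = 2^{n-2}+k$ handles the first equation, while applying $a(2j) = a(j)$ to $2^{n-1}+m = 2(2^{n-2}+k)$ handles the fourth.

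For the odd case $m = 2k-1$, I would use $2^{n-1} + m = 2(2^{n-2}+k) - 1$ to dispose of the bottom row, and $a(2^{n-1}+m) = 2^{n-2}+k$ (again by $a(2j-1) = j$) to dispose of the second equation. For the first I would factor $2^{n-1}+m-1 = 2(2^{n-2}+k-1)$ and then apply $a(2j) = a(j)$ to reduce $a(2^{n-1}+m-1)$ to $a(2^{n-2}+k-1)$.

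The only genuine point to check is the legitimacy of replacing $a^*$ by $a$ on both sides of the first equations, which requires the arguments to exceed $1$. Since $m > 2$ forces $k \geq 2$ and $n \geq 2$, every argument in question is at least $2$, so the substitution is valid throughout. I do not expect any real obstacle here, as the lemma is an arithmetic identity in disguise; what needs real care is parsing the statement itself, since in the odd case the third and fourth displayed equations appear to coincide, and one of them is presumably meant to assert a distinct equality that would have to be reconstructed from the intended use of the lemma in Proposition~\ref{seq def} before the corresponding index identity can be written down.
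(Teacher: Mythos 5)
Your proposal is correct and is essentially identical to the paper's own proof, which simply states that each equation "follows from direct application of Definition \ref{def a}"; you have just written out the index bookkeeping explicitly (and rightly noted that the third and fourth equations in the odd case are a duplicated typo). No further comment is needed.
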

	
	\begin{proof}
		For each case the equations follow from direct application of Definition \ref{def a}.
	\end{proof}
	
	\begin{proof}[Proof of Proposition \ref{seq def}]
		We prove this by induction on the levels of the graph of Markov sequences. The base of induction is given by
		\[
		\begin{aligned}
		S(3)&=S(0)\oplus S(2)=S\circ a^*(1)\oplus S(2),\ \mbox{and}\\
		S(4)&=S(2)\oplus S(1)=S(2)\oplus S\circ a(2).
		\end{aligned}
		\]
		
		Next we assume that the hypothesis is true for every $S(j)$ up the $n{-}1$-th level, that is to say, we have for $i=1,\ldots,2^{n-1}$ that  
		\begin{equation} \label{eq to start}
		S(2^{n-1}{+}i)=
		\begin{cases}
		S(2(2^{n-2}{+}k)) , & \mbox{if } i=2k,\\
		S(2(2^{n-2}{+}k){-}1), & \mbox{if } i=2k-1.
		\end{cases}	
		\end{equation}
		By definition of $(S(j))$ we have that
		\[
		\begin{aligned}
		S(2(2^{n-2}{+}k))=S(2^{n-2}{+}k)\oplus S\circ a(2^{n-2}{+}k),\ \ \mbox{and}\\
		S(2(2^{n-2}{+}k){-}1)=S\circ a^*(2^{n-2}{+}k{-}1))\oplus S(2^{n-2}{+}k).
		\end{aligned}
		\]
		
		From the induction hypothesis we have
		\[
		\begin{aligned}
		S(2(2^{n-1}{+}m)) &=S(2^{n-1}{+}m)\oplus S\circ a(2^{n-1}{+}m),\\
		S(2(2^{n-1}{+}m){-}1)&=S\circ a^*(2^{n-1}{+}m{-}1)\oplus S(2^{n-1}{+}m).
		\end{aligned}
		\]
		We take both an $\mathcal{L}_{\oplus}$ and an $\mathcal{R}_{\oplus}$ operation on either case in Equation (\ref{eq to start}). 		
		For $i=2k$ we have from Lemma \ref{lem must show} that
		\[
		\begin{aligned}
		\mathcal{L}_{\oplus}(S(2^{n-1}{+}i))&=S(2^{n-2}{+}k)\oplus S(2^{n-1}{+}2k)\\
		&=S\circ a^*(2^{n-1}{+}m{-}1)\oplus S(2^{n-1}{+}m),\\
		\mathcal{R}_{\oplus}(S(2^{n-1}{+}i))&=S(2(2^{n-2}{+}k))\oplus S\circ a(2^{n-2}{+}k)\\
		&=S(2^{n-1}{+}m)\oplus S\circ a(2^{n-1}{+}m).
		\end{aligned}
		\]
		
		For the case where $i=2k-1$ is similar after application of Lemma \ref{lem must show}. All cases for the element in the $n$-th level coming from Equation (\ref{eq to start}) are covered, completing the proof.
	\end{proof}
	
	\begin{definition}
		The length of the sequence $S(n)$ is denoted $|S(n)|$.
	\end{definition}
	
	\begin{remark}
		From Proposition \ref{seq def} we have since $|S(0)|=|S(1)|$ that
		\[
		\begin{aligned}
		|S(2n)|&=|S(n)|+|S\circ a(n)|,\ \mbox{and}\\
		|S(2n{-}1)|&=|S(a(n{-}1))|+ |S(n)|.
		\end{aligned}
		\]
		
	\end{remark}
	
	\subsection{Symmetry of construction of sequences $S((n))$}
	We use the symmetry of the graph $\mathcal{G}_{a,b},\prec $ and of Stern's diatomic sequence to prove Lemma \ref{lem new} that significantly shortens the proof of Proposition \ref{lem main}. For this we need the following short lemmas.	
	\begin{lemma} \label{lem1}
		For $k\geq 2$ we have
		\[
		|S(k)|=|S\circ a(k)|+|S\circ a(k{-}1)|.
		\]
	\end{lemma}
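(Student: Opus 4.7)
The plan is to derive the identity directly from the length recurrences listed in the remark immediately after Proposition \ref{seq def}, combined with the recursion for $(a(j))$ in Definition \ref{def a}. Those recurrences state $|S(2n)| = |S(n)| + |S \circ a(n)|$ and $|S(2n-1)| = |S \circ a(n-1)| + |S(n)|$, where the latter already absorbs the only discrepancy between $a$ and $a^*$ by invoking the standing hypothesis $|S(0)| = |S(1)|$. Given these, the lemma reduces to a two-case verification on the parity of $k$, with no induction required.

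For the even case $k = 2j$, Definition \ref{def a} yields $a(k) = a(2j) = a(j)$ and $a(k-1) = a(2j-1) = j$, so
\[
|S \circ a(k)| + |S \circ a(k-1)| = |S \circ a(j)| + |S(j)|,
\]
which agrees with $|S(2j)|$ by the even length-recurrence. For the odd case $k = 2j - 1$, Definition \ref{def a} gives $a(k) = a(2j-1) = j$ and $a(k-1) = a(2j-2) = a(2(j-1)) = a(j-1)$, so
\[
|S \circ a(k)| + |S \circ a(k-1)| = |S(j)| + |S \circ a(j-1)|,
\]
which agrees with $|S(2j-1)|$ by the odd length-recurrence.

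The statement contains no genuinely hard step; the only care needed is at the boundary $k \in \{2,3\}$, where the recurrences for $|S(2n)|$ and $|S(2n-1)|$ interact with the base values $S(0), S(1), S(2)$ and with the definition $a^*(1) = 0$. In both situations the identification $|S(0)| = |S(1)|$ (already used to write the remark in its present clean form) absorbs the discrepancy, so no separate base-case argument is required. The main obstacle, such as it is, lies purely in confirming that this one indexing subtlety does not propagate elsewhere.
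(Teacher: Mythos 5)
Your proof is correct and amounts to essentially the same computation as the paper's: both cases reduce to the length recurrences following Proposition \ref{seq def} combined with the identities $a(2j)=a(j)$, $a(2j-1)=j$, and $a(2j-2)=a(j-1)$, with the $k\in\{2,3\}$ boundary absorbed by $|S(0)|=|S(1)|$. The paper phrases its argument as an induction, but the inductive hypothesis is never genuinely used there, so your observation that no induction is required is accurate.
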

	
	\begin{proof}
		We prove this lemma by induction. First we have
		\[
		|S(2)|=4=2|S(1)|=|S\circ a(2)|+|S\circ a(1)|.
		\]
		
		Assume $|S(k)|=|S\circ a(k)|{+}|S\circ a(k{-}1)|$ for all $k=2,\ldots,N{-}1$, for some $N\in\Z$. We have two cases:
		\begin{itemize}
			\item[\emph{$N$ even}:] If $N=2m$, then
			\[
			\begin{aligned}
			|S(2m)|&=|S(m)|+|S\circ a(m)|\\
			&=|S\circ a(2m{-}1)|+|S\circ a(2m)|\\
			&=|S\circ a(N{-}1)|+|S\circ a(N)|.
			\end{aligned}
			\]
			\item[\emph{$N$ odd}:] The $N=2m-1$ case is similar. This concludes the proof.
			
		\end{itemize}
	\end{proof}
	
	\begin{lemma} \label{lem2}
		For $k\geq 1$ we have that
		\[
		|S\circ a(k)|=2d_k.
		\]
	\end{lemma}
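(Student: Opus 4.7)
The plan is a straightforward strong induction on $k$, in which the parity-based recursions for $a(k)$ (Definition \ref{def a}) and for $d_k$ (Definition \ref{def sds}) line up term-by-term through Lemma \ref{lem1}. I would split the inductive step on whether $k$ is even or odd and feed Lemma \ref{lem1} into the odd case.

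For the base, both $k=1$ and $k=2$ give $a(k)=1$, so $S\circ a(k)=S(1)$, which has length $2$ under the setup of this section; this matches $2d_1=2d_2=2$. For the inductive step at $k\ge 3$, assume the identity for all smaller positive indices. If $k=2j$ is even, Definition \ref{def a} gives $a(k)=a(j)$, so
\[
|S\circ a(k)|=|S\circ a(j)|=2d_j=2d_{2j}=2d_k,
\]
using the inductive hypothesis and $d_{2j}=d_j$. If $k=2j-1$ with $j\ge 2$, then $a(k)=j$, so $|S\circ a(k)|=|S(j)|$. Applying Lemma \ref{lem1} to $j$ and then the inductive hypothesis gives
\[
|S(j)|=|S\circ a(j)|+|S\circ a(j-1)|=2d_j+2d_{j-1}=2(d_j+d_{j-1})=2d_{2j-1},
\]
where the last equality is the odd-index recursion in Definition \ref{def sds}.

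I do not expect a real obstacle here: the only point to check is that the reductions always drop the index. For even $k=2j$ this is automatic since $a(j)\le j<k$ for $j\ge 1$, and for odd $k=2j-1$ with $j\ge 2$ it is immediate from $j,j-1<2j-1$. The case $j=1$ on the odd side is exactly the base case $k=1$, so nothing slips through. The whole proof is therefore essentially a verification that the two recursions are parallel once Lemma \ref{lem1} is in hand.
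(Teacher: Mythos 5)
Your proof is correct and follows essentially the same route as the paper: strong induction with base cases $k=1,2$, the even case handled by $a(2j)=a(j)$ and $d_{2j}=d_j$, and the odd case by feeding Lemma \ref{lem1} into the recursion $d_{2j-1}=d_j+d_{j-1}$. The paper merely states the odd case is ``proved similarly''; you have supplied exactly the detail it omits.
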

	
	\begin{proof}
		We prove this by induction again. First we have
		\[
		|S\circ a(2)|=|S\circ a(1)|=2=2d_1=2d_2.
		\]
		Assume $|S\circ a(k)|=2d_k$ for all $k=1,\ldots,N{-}1$, for some positive integer $N$. We have two cases:
		\begin{itemize}
			\item[\emph{$N$ even}:] If $N=2m$, then $d_{2m}=d_m$, and $|S\circ a(2m)|=|S\circ a(m)|$. So
			\[
			2d_{N}=|S\circ a(N)|,
			\]
			which happens if and only if
			\[
			2d_m=|S\circ a(m)|.
			\]
			\item[\emph{$N$ odd}:] The $N=2m{-}1$ case is proved similarly. This concludes the proof.
		\end{itemize}
	\end{proof}
	
	\begin{lemma} \label{lem3}
		For a positive odd integer $k=k_1$, let $i\in\Z$ be such that the numbers
		\[
		k_j=\frac{k_{j-1}+1}{2},\  j=1,\ldots,i,
		\]
		are positive integers, with $k_i$ even. Let $k_{i+1}=k_i/2$. Then
		\[
		\frac{|S(k_{i+1})|}{2}=d_{k{-}1}.
		\]
	\end{lemma}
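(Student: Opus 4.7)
The plan is to compute $d_{k-1} = d_{k_1-1}$ by descending through the chain $k_1, k_2, \ldots, k_i, k_{i+1}$ using the two recursions in Definition \ref{def sds}, and then to match the resulting expression to $|S(k_{i+1})|/2$ via Lemmas \ref{lem1} and \ref{lem2}. The whole argument is essentially a telescoping exercise.

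First I would observe that, by the defining condition of $i$, each $k_j$ for $j<i$ is odd (otherwise $k_i$ would not be the \emph{first} even entry in the chain). Hence $k_j-1$ is even and equals $2(k_{j+1}-1)$, so the rule $d_{2n}=d_n$ yields $d_{k_j-1}=d_{k_{j+1}-1}$. Iterating this from $j=1$ up to $j=i-1$ gives $d_{k_1-1}=d_{k_i-1}$. Next, since $k_i$ is even and $k_{i+1}=k_i/2$, the integer $k_i-1=2k_{i+1}-1$ is odd, so the other Stern recursion $d_{2n-1}=d_n+d_{n-1}$ gives
\[
d_{k_i-1}=d_{k_{i+1}}+d_{k_{i+1}-1}.
\]
Now I would apply Lemma \ref{lem1} with $k=k_{i+1}$ to write $|S(k_{i+1})|=|S\circ a(k_{i+1})|+|S\circ a(k_{i+1}-1)|$ and then invoke Lemma \ref{lem2} on each summand to convert it to $2d_{k_{i+1}}+2d_{k_{i+1}-1}$. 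Dividing by $2$ and chaining the three displayed equalities produces $|S(k_{i+1})|/2 = d_{k_1-1}=d_{k-1}$, as required.

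The one place this runs into trouble is the boundary case $k_{i+1}=1$, which is forced by $k_i=2$ (and hence $k_1 = 2^{i-1}+1$). Here Lemma \ref{lem1} is stated only for $k\geq 2$ and Lemma \ref{lem2} needs $k_{i+1}-1\geq 1$, so the main argument does not strictly apply. I expect this to be the principal obstacle, but it is settled by a one-line direct verification: $|S(1)|/2=1$, while iterated use of $d_{2n}=d_n$ gives $d_{k_1-1}=d_{2^{i-1}}=d_1=1$, so the formula remains valid. Once this edge case is handled separately, the telescoping argument above completes the proof.
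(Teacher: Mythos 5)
Your argument is correct and follows essentially the same route as the paper's: both telescope $d_{k-1}=d_{k_1-1}=\dots=d_{k_i-1}$ using $d_{2n}=d_n$ and then convert $d_{k_i-1}$ into $|S(k_{i+1})|/2$ via Lemma \ref{lem2}. The paper is marginally more direct at the second step --- it observes from Definition \ref{def a} that $a(k_i-1)=a(2k_{i+1}-1)=k_{i+1}$, hence $S(k_{i+1})=S\circ a(k_i-1)$ and Lemma \ref{lem2} gives $|S(k_{i+1})|=2d_{k_i-1}$ in one stroke, which avoids your extra use of Lemma \ref{lem1}, the odd-index Stern recursion, and the separate $k_{i+1}=1$ check (which you do handle correctly).
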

	
	\begin{proof}
		From Definition \ref{def a} and Lemma \ref{lem2} we have $a(2k_{i+1}{-}1){=}k_{i+1}$, $a(k_i{-}1){=}k_{i+1}$, and $d_{k_i{-}1}{=}d_{k_2{-}1}{=}\ldots{=}d_{k_i{-}1}$. Hence $S(k_{i+1}){=}S\circ a(k_i{-}1)$ and so
		\[
		|S\circ a(k_i{-}1)|=2d_{k{-}1}.
		\]
	\end{proof}
	
	\begin{lemma} \label{lem new}
		Let $n>1$. For $i=1,\ldots,2^{n-1}$ define the integers
		\[
		k'=6\cdot2^{n-2}{+}i{-}1\ \mbox{ and }\ k''=6\cdot2^{n-2}{-}i{+}1.
		\]
		Then we have
		\[
		\frac{\left|S(k'+1)\right|}{2}-d_{k'+1}=d_{k''}.
		\]
	\end{lemma}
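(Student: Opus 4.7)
The plan is to collapse the left-hand side into a pure statement about Stern's diatomic sequence, and then invoke the palindromic symmetry of Stern's sequence between consecutive powers of two.

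First, note that $k'+1 = 6\cdot 2^{n-2}+i \geq 7 > 2$, so Lemma~\ref{lem1} applies and gives
\[
|S(k'+1)| = |S\circ a(k'+1)| + |S\circ a(k')|.
\]
Applying Lemma~\ref{lem2} to each summand turns this into $|S(k'+1)| = 2d_{k'+1} + 2d_{k'}$, and dividing by two and subtracting $d_{k'+1}$ yields
\[
\frac{|S(k'+1)|}{2} - d_{k'+1} = d_{k'}.
\]
At this point the lemma reduces to the single identity $d_{k'} = d_{k''}$.

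Set $M = 6\cdot 2^{n-2} = 3\cdot 2^{n-1}$ and $j = i-1$, so that $k' = M+j$, $k'' = M-j$, and $j$ ranges over $\{0,1,\ldots,2^{n-1}-1\}$ as $i$ ranges over $\{1,\ldots,2^{n-1}\}$. Since $M$ is the midpoint of the dyadic interval $[2^n, 2^{n+1}]$ and $|j| < 2^{n-1}$, both $M+j$ and $M-j$ lie in this interval. The required equality $d_{M+j} = d_{M-j}$ is then exactly the standard palindromic symmetry $d_{2^n+k} = d_{2^{n+1}-k}$ for $0 \leq k \leq 2^n$, depicted in Figure~\ref{fig sds}.

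The main obstacle is that this dyadic symmetry of Stern's sequence has not been recorded earlier in the paper, so it would be worth inserting as a preliminary fact. A short induction on $n$ using the recurrences $d_{2m} = d_m$ and $d_{2m-1} = d_m + d_{m-1}$, with a case split on the parity of $k$, handles it cleanly; this is the only genuinely nontrivial step, and it is a well-known property of Stern's sequence (see, e.g., \cite{urbiha}).
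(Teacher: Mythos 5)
Your proof is correct and follows essentially the same route as the paper: both reduce the left-hand side to $d_{k'}$ (the paper via the identity $k'+1=a(2(k'+1)-1)$ and Lemma~\ref{lem2}, you via Lemmas~\ref{lem1} and~\ref{lem2}, which is the same computation) and then appeal to the dyadic symmetry of Stern's sequence. You are in fact slightly more careful than the paper, which merely cites Figure~\ref{fig sds} for $d_{k'}=d_{k''}$, whereas you correctly identify the precise statement $d_{2^n+k}=d_{2^{n+1}-k}$ and note it deserves its own short inductive proof.
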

	\begin{remark} \label{rem new}
		Let $f$ be a function taking a sequence $S_{A,B}(k)$ to $S_{B,A}(k)$.
		Due to the symmetry of the Definition \ref{def first s} we have that, for every $k>2$, there is a number $l$ such that
		\[
		S(k)=\overline{f(S(m))}.
		\]
		Lemma \ref{lem new} says that if $k=6\cdot2^{n-2}+i$ for positive integers $n$ and $i=1,\ldots,2^{n-1}$, then
		\[
		m=6\cdot2^{n-2}-i+1.
		\]
	\end{remark}
	
	\begin{proof}[Proof of Lemma \ref{lem new}]
		We have that $k'{+}1=a(2(k'{+}1){-}1)$ from Definition \ref{def a}.	Using Lemma \ref{lem2} we then have 
		\[
		\begin{aligned}
		\frac{|S(k'+1)|}{2}=\frac{|S\circ a(2(k'+1)-1)|}{2}&=d_{2(k'+1)-1}\\
		&=d_{k'+1}+d_{k'}.
		\end{aligned}
		\]
		It remains to show that
		\[
		d_{k'}=d_{k''},
		\]
		but this follows from the symmetry seen in Figure \ref{fig sds}.
	\end{proof}

	\subsection{Alternate form for Markov sequences} \label{ss ms path}
	In the proof of Proposition~\ref{lem main} we will use different formulae for Markov sequences than in Proposition \ref{seq def}, and we set these down in the following two Lemmas.
	
	\begin{lemma} \label{lem4}
		For a positive even integer $k=k_1$, let $i\in\Z$ be such that the numbers
		\[
		k_j=\frac{k_{j-1}}{2},\  j=1,\ldots,i,
		\]
		are positive integers, and $k_i$ is odd. Let $k_{i+1}=(k_i{+}1)/2$.
		Then
		\[
		S(k)=S\circ a^*(k_{i+1}{-}1)\oplus S(k_{i+1})^i,
		\]
		where the power $i$ indicates a sequence concatenated $i$ times.
	\end{lemma}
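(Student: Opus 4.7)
The plan is to unfold $S(k)$ by repeated application of Proposition~\ref{seq def}, keeping track of the values $a(k_j)$ along the way. Everything reduces to two observations: the chain $k_1,\ldots,k_i$ collapses under $a$ to a single value, and the last step in the chain has odd index, to which the odd case of Proposition~\ref{seq def} applies.

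First, I would establish that $a(k_j) = k_{i+1}$ for each $j = 2,\ldots,i$. Since every $k_j$ with $j < i$ is even and equals $2k_{j+1}$, the identity $a(2m) = a(m)$ from Definition~\ref{def a} gives $a(k_2) = a(k_3) = \cdots = a(k_i)$. Because $k_i$ is odd with $k_i = 2k_{i+1}-1$, the identity $a(2m-1) = m$ then gives $a(k_i) = k_{i+1}$.

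Next, I would iterate the even case of Proposition~\ref{seq def}. For each $j = 1,\ldots,i-1$, writing $k_j = 2k_{j+1}$ produces
\[
S(k_j) = S(k_{j+1}) \oplus S\circ a(k_{j+1}) = S(k_{j+1}) \oplus S(k_{i+1}),
\]
where the second equality uses the previous step. A descending induction on $j$ then yields
\[
S(k_1) = S(k_i) \oplus S(k_{i+1})^{\,i-1}.
\]

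Finally, since $k_i = 2k_{i+1}-1$ is odd, the odd case of Proposition~\ref{seq def} gives $S(k_i) = S\circ a^*(k_{i+1}-1) \oplus S(k_{i+1})$, and substituting into the previous display produces the desired formula. The main obstacle is just index bookkeeping, in particular the boundary case $i = 2$, where the even-case iteration is empty and the statement comes entirely from the single odd-case expansion; once the collapse $a(k_2) = \cdots = a(k_i) = k_{i+1}$ is in hand, the rest is mechanical.
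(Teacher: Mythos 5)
Your proof is correct and follows essentially the same route as the paper: iterate the even case of Proposition~\ref{seq def} down the chain $k_1,\ldots,k_i$ (using the collapse $a(k_2)=\cdots=a(k_i)=k_{i+1}$, which the paper leaves implicit) and finish with a single application of the odd case to $S(k_i)=S(2k_{i+1}-1)$. The only quibble is your closing remark about the boundary case: at $i=2$ the even-case iteration consists of one step rather than being empty (it would be empty only for $i=1$, which cannot occur since $k$ is even), but this does not affect the argument.
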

	
	\begin{proof}
		Through application of Proposition \ref{seq def} we have that
		\[
		S(k)=S(k_2)\oplus S\circ a(k_2)	=S(k_i)\oplus S\circ a(k_i)^{i-1}.
		\]
		Since $k_i=2k_{i+1}{-}1$, we have that
		\[
		S(k_i)=S\circ a^*(k_{i+1}{-}1)\oplus S(k_{i+1}).
		\]
		and so
		\[
		S(k)=S\circ a^*(k_{i+1}{-}1)\oplus S(k_{i+1})^i,
		\]
		proving the lemma.
	\end{proof}
	
	\begin{lemma} \label{lem5}
		For a positive odd integer $k=k_1$, let $i\in\Z$ be such that the numbers
		\[
		k_j=\frac{k_{j-1}+1}{2},\  j=1,\ldots,i,
		\]
		are positive integers, and $k_i=(k_{i-1}{+}1)/2$ is even. Let $k_{i+1}=k_i/2$. Then
		\[
		S(k)=\begin{cases}
		S(k_{i+1})^i\oplus S\circ a(k_{i+1}), & \mbox{if } k_i>2,\\
		S(0)^i\oplus S(1), & \mbox{if } k_i=2.
		\end{cases}
		\]
	\end{lemma}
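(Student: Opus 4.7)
The plan is to iterate Proposition \ref{seq def}(ii) in the same manner as the proof of Lemma \ref{lem4}, but with the roles of the odd-index and even-index recursions swapped. Since $k_1$ is odd, I would first apply
\[
S(2j{-}1) = S \circ a^*(j{-}1) \oplus S(j)
\]
with $j = k_2$, getting $S(k_1) = S \circ a^*(k_2{-}1) \oplus S(k_2)$. The hypothesis that every $k_j$ is a positive integer forces $k_2, \ldots, k_{i-1}$ to be odd (otherwise $k_{j+1} = (k_j{+}1)/2$ would fail to be integral), so I can keep peeling off the same rule. After $i{-}1$ such applications the expansion reads
\[
S(k_1) = S \circ a^*(k_2{-}1) \oplus S \circ a^*(k_3{-}1) \oplus \cdots \oplus S \circ a^*(k_i{-}1) \oplus S(k_i).
\]

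The next step is to split on $k_i$. In the case $k_i > 2$, $k_i$ is even and I apply the even-index rule $S(2m) = S(m) \oplus S \circ a(m)$ with $m = k_{i+1}$ to obtain $S(k_i) = S(k_{i+1}) \oplus S \circ a(k_{i+1})$. I would then collapse each intermediate factor $S \circ a^*(k_j{-}1)$ using Definition \ref{def a}: since $k_j{-}1 = 2(k_{j+1}{-}1)$ for $j \le i{-}1$, the identity $a(2m) = a(m)$ yields $a(k_2{-}1) = a(k_3{-}1) = \cdots = a(k_i{-}1)$, and since $k_i - 1 = 2k_{i+1} - 1$ is odd, the identity $a(2m{-}1) = m$ gives $a(k_i{-}1) = k_{i+1}$. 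Because $k_{i+1} \ge 2$ here, the asterisk is inert ($a^* = a$) throughout, so every $S \circ a^*(k_j{-}1)$ equals $S(k_{i+1})$ and the telescoping delivers $S(k_{i+1})^i \oplus S \circ a(k_{i+1})$ as claimed.

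The delicate subcase is $k_i = 2$. Here the even-index recursion cannot be invoked on $S(2)$ (its $j{=}1$ would require the undefined $a^*(0)$), so I would instead substitute the base-case identity $S(2) = S(0) \oplus S(1)$. Simultaneously, at the innermost step one has $k_{i+1} = 1$ and therefore $a^*(k_i{-}1) = a^*(1) = 0$ by the definition of $a^*$, forcing the last intermediate factor to be $S(0)$ rather than $S(1)$. The main obstacle will be bookkeeping: tracking precisely which factors in the odd-rule expansion pick up the asterisk exception, and then reassembling the string of $S(0)$'s and $S(1)$'s (together with the $S(0) \oplus S(1)$ from the base case) into the advertised form $S(0)^i \oplus S(1)$. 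Once this accounting is done carefully, both subcases conclude the proof.
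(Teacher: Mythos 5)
Your expansion and your treatment of the subcase $k_i>2$ match the paper's (one-line) proof: iterate $S(2j{-}1)=S\circ a^*(j{-}1)\oplus S(j)$ down to $S(k_i)$ and collapse every factor to $S(k_{i+1})$ via $a(2m)=a(m)$ and $a(2m{-}1)=m$; that half is complete and correct. The genuine gap is the subcase $k_i=2$, which you explicitly defer as ``bookkeeping'': that bookkeeping is the entire content of the subcase, and if you carry it out with Definition \ref{def a} exactly as written it does not produce the advertised answer. When $k_i=2$ the arguments of the intermediate factors are $k_j{-}1=2^{i-j}$ for $j=2,\ldots,i$; only the innermost one equals $1$ and triggers the exception $a^*(1)=0$. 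Every other argument is a power of two greater than $1$, so $a^*(k_j{-}1)=a(2^{i-j})=1$ and that factor is $S(1)=B$, not $S(0)=A$. The expansion then reads $S(1)^{i-2}\oplus S(0)\oplus S(2)=S(1)^{i-2}\oplus S(0)^2\oplus S(1)$, which differs from $S(0)^i\oplus S(1)$ as soon as $i\geq 3$. Concretely, for $k=5$ (so $k_2=3$, $k_3=2$, $i=3$) the recursion of Proposition \ref{seq def}(ii) taken literally gives $S\circ a^*(2)\oplus S(3)=B\oplus A\oplus A\oplus B$, while the graph definition (the central element of $\mathcal{L}_{\oplus}^2(A,A\oplus B,B)$, the $\prec$-first vertex of its level) and the lemma both give $A\oplus A\oplus A\oplus B$.

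So the step you postpone is exactly the step that fails as stated. The lemma is true for the graph-defined $S(k)$, but to reach it along your route one must read $a^*(m)=0$ whenever $a(m)=1$ (i.e.\ whenever $m$ is a power of two), not only at $m=1$: this is what is required for $S\circ a^*(j{-}1)$ to be the left neighbour $S(0)$ of the leftmost vertices $S(2^{n}{+}1)$. With that reading all $i{-}1$ intermediate factors become $S(0)$ and, together with $S(2)=S(0)\oplus S(1)$, the concatenation is $S(0)^{i}\oplus S(1)$ as claimed. Your write-up needs either this corrected evaluation of $a^*$ on powers of two or an argument that bypasses Proposition \ref{seq def}(ii) on the leftmost branch; asserting that the accounting ``concludes the proof'' leaves the only delicate point unproved, and, read literally, it is false.
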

	
	\begin{proof}
		Similar to Lemma \ref{lem4} the proof follows from application of Proposition \ref{seq def}.
	\end{proof}
	
	\begin{remark}
		We are never in a situation where
		\[
		S(k)=S(p)^\lambda\oplus S(q)^\rho,
		\]
		where $\lambda=\rho=1$,	since if $k$ is even and $k/2$ is odd, then $i=2$, and
		\[
		S(k)=S\circ a^*(k_{i+1}{-}1)\oplus S(k_{i+1})^2.
		\]
		A similar statement holds if $k$ is odd.
	\end{remark}
	
	Now we give the final lemma for Proposition \ref{lem main}.

	\begin{lemma} \label{lem6}
		Assume the $d_n$-th circular shift of $S(n)$ is palindromic for all $n=1,\ldots,k_1{-}1\in\Z$, and define $L=d_{k_2}$.
		\begin{itemize}
			\item[(\emph{i})] Let $k=k_1,\ldots,k_{i+1}$ be as in Lemma \ref{lem4}, for some positive integer $i$, and let
			\[
			R=L+\frac{|S\circ a(k_{i+1}{-}1)|+(i{-}1)|S(k_{i+1})|}{2}.
			\]
			Then $R>|S\circ a(k_{i+1}{-}1)|$.
			\item[(\emph{ii})] Let $k=k_1,\ldots,k_{j+1}$ be as in Lemma \ref{lem5}, for some positive integer $j$, and let
			\[
			R=L+\frac{|S(k_2)|}{2}.
			\]
			Then $L<(j{-}1)|S(k_{j+1})|$.
		\end{itemize}
	\end{lemma}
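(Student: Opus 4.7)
The plan is to reduce both inequalities to routine arithmetic on Stern's diatomic sequence, using Lemmas \ref{lem1} and \ref{lem2} as the bridge between lengths of $S$-sequences and the values $d_n$. Throughout I rely on $|S\circ a(k)| = 2 d_k$ and $|S(k)| = 2 d_k + 2 d_{k-1}$, together with the defining Stern recursions $d_{2n} = d_n$ and $d_{2n-1} = d_n + d_{n-1}$.

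For part \emph{(i)}, the chain $k_1, \ldots, k_i$ proceeds by halving, so $k_2 = 2^{i-2} k_i$ with $k_i$ odd. Iterating $d_{2n} = d_n$ gives $L = d_{k_2} = d_{k_i}$, and $k_i = 2k_{i+1} - 1$ then yields $L = d_{k_{i+1}} + d_{k_{i+1}-1}$. Substituting this together with $|S\circ a(k_{i+1}-1)| = 2 d_{k_{i+1}-1}$ into the defining expression for $R$ produces
\[
R - |S\circ a(k_{i+1}-1)| \;=\; d_{k_{i+1}} + \frac{(i-1)\,|S(k_{i+1})|}{2},
\]
which is strictly positive: $k_1$ even with $k_i$ odd forces $i \geq 2$, while $k_{i+1} \geq 1$ gives $d_{k_{i+1}} \geq 1$.

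For part \emph{(ii)}, the heart of the argument is a closed form for $L$. I claim by downward induction on $t$ that
\[
d_{k_{j-t}} \;=\; (t+1)\, d_{k_{j+1}} + t\, d_{k_{j+1}-1}, \qquad t = 0, 1, \ldots, j-2.
\]
The base case $t = 0$ follows from $k_j = 2 k_{j+1}$. The inductive step uses $k_{s-1} = 2k_s - 1$, which gives both $d_{k_{s-1}} = d_{k_s} + d_{k_s - 1}$ and the auxiliary identity $k_{s-1} - 1 = 2(k_s - 1)$; the latter propagates $d_{k_s - 1} = d_{k_j - 1} = d_{k_{j+1}} + d_{k_{j+1}-1}$ for every $s \in \{1,\ldots,j\}$, so the induction closes. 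Setting $t = j-2$ gives $L = (j-1) d_{k_{j+1}} + (j-2) d_{k_{j+1}-1}$, whence
\[
L - (j-1)|S(k_{j+1})| \;=\; -(j-1)\, d_{k_{j+1}} - j\, d_{k_{j+1}-1} \;<\; 0,
\]
since $j \geq 2$ and $d_{k_{j+1}} \geq 1$.

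The principal obstacle is the bookkeeping in part \emph{(ii)}, specifically isolating and verifying the invariance $d_{k_s - 1} = d_{k_j - 1}$ along the descent chain; once this is in hand, both halves of the lemma collapse to one-line Stern identities.
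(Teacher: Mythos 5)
Your proof is correct, and although it leans on the same two bridges as the paper (Lemma \ref{lem1} and Lemma \ref{lem2}, which convert lengths of $S$-sequences into values of Stern's sequence), the execution is genuinely different and, in part (\emph{ii}), noticeably more complete. For part (\emph{i}) the paper first reduces to the case $i=2$, asserting that this case implies all $i>2$, and then verifies an inequality via the identity $a(k_3-1)=a(k_2-1)$; your direct computation $R-|S\circ a(k_{i+1}-1)|=d_{k_{i+1}}+(i-1)|S(k_{i+1})|/2>0$ reaches the same conclusion uniformly in $i$ and dispenses with the need to justify that reduction. For part (\emph{ii}) the paper introduces an auxiliary chain $k_1^*,\ldots,k_j^*$ and claims it suffices to show $L<|S(k_2/2)|$, which it then checks; as written this reduction only literally parses when $k_2$ is even (i.e.\ $j=2$), since otherwise $k_2/2$ is not an integer and $(j-1)|S(k_2/2)|$ is not visibly equal to the required quantity $(j-1)|S(k_{j+1})|$. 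Your closed form $L=(j-1)d_{k_{j+1}}+(j-2)d_{k_{j+1}-1}$, obtained by propagating the invariance $d_{k_{s-1}-1}=d_{k_s-1}$ down the halving chain, handles every $j\geq 2$ at once and makes the margin explicit, namely $L-(j-1)|S(k_{j+1})|=-(j-1)d_{k_{j+1}}-j\,d_{k_{j+1}-1}<0$. The only caveats, which you share with the paper rather than introduce, are the implicit normalisation $|S(0)|=|S(1)|=2$ needed for Lemma \ref{lem2}, and the degenerate endpoint $k_{i+1}=1$ (respectively $k_{j+1}=1$), where $a^*(0)$ is undefined; neither affects the substance of your argument.
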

	
	\begin{proof}
		(\emph{i}) If $R>|S\circ a(k_{i+1}{-}1)|$ when $i=2$ then it is true for all $i>2$. So let $i=2$. We want to show
		\begin{equation} \label{eq2}
		d_{k_2}+\frac{|S\circ a(k_3{-}1)|+|S(k_2)|}{2}>|S\circ a(k_3{-}1)|.
		\end{equation}
		Since $|S\circ a(k_2)|/2=d_{k_2}$ from Lemma \ref{lem1}, and Lemma \ref{lem2}, we get that Equation (\ref{eq2}) becomes
		\[
		2|S\circ a(k_2)|+|S\circ a(k_2{-}1)|>|S\circ a(k_3{-}1)|,
		\]
		which is true, since
		\[
		a(k_3{-}1)=a(\tfrac{k_2+1}{2}{-}1)=a(\tfrac{k_2-1}{2})=a(k_2{-}1).
		\]
		
		\noindent (\emph{ii}) Consider the case were $k=k_1,\ldots,k_{j+1}$ are as in Lemma \ref{lem5}. Let $k^*_1$,\ldots,$k_j^*$ be as in Lemma \ref{lem5}, with $k_j^*=k_2$, and $k^*_{j-1}=k$. Since $k$ is odd, if $L<|S(k_2/2)|$ then
		\[
		L<|S(k_2/2)|<(j-1)|S(k_2/2)|=(j-1)|S(k_j^*/2)|.
		\]
		
		Let $k_3=k_2/2$. From Lemmas \ref{lem1} and \ref{lem2} we have that
		\[
		L=d_{k_2}=\frac{|S\circ a(k_3)|}{2},\quad \mbox{and}\quad 
		|S(k_3)|=|S\circ a(k_3)|+|S\circ a(k_3{-}1)|.
		\]
		From these two facts, we get
		\[
		L=\frac{|S\circ a(k_3)|}{2}<|S\circ a(k_3)|<|S\circ a(k_3)|{+}|S\circ a(k_3-1)|=|S(k_3)|,
		\]
		as required.
	\end{proof}
	
	\subsection{Proof of Proposition \ref{lem main}} \label{ssec proof}
	We give the proof of Proposition \ref{lem main}.
	
	\begin{proof}[Proof of Proposition \ref{lem main}]
		We prove this Proposition by induction on $n$. We must show two things: Firstly, that $S_{(a,,a),(b,b)}(n)$ is evenly palindromic, and secondly that $C_{d_n}(S(n))$ is palindromic. 
		
		It is clear that the statement holds for $n=0,1,2$. Assume the statement is true for all $n=3,\ldots,k{-}1$, for some $k>3$. We have two cases, for when $k$ is either even or odd. In either case, we denote the elements of the sequence $S(k)$ by $\lambda$'s, so
		\[
		S(k)=(\lambda_1,\ldots,\lambda_{|S(k)|}).
		\]
		
		\noindent(\emph{i}) Let $k=k_1$ be even. Let $i\in\Z$ be such that the numbers
		\[
		k_j=\frac{k_{j-1}}{2},\ j=1,\ldots,i,
		\]
		are positive integers, and $k_i$ is odd. Let $k_{i+1}=(k_i+1)/2$.
		Let $N$ and $M$ be the lengths of the sequences $S\circ a^*(k_{i+1}{-}1)$ and $S(k_{i+1})$ respectively. Then $		S(k)=S\circ a^*(k_{i+1}{-}1)\oplus S(k_{i+1})^i=(\lambda_1,\ldots,\lambda_{N{+}iM})$. 
		
		By the induction hypothesis we have that $C_{d_{k_2}}(S(k_2))$ is palindromic. Recall that $S(k)=S(k_2)\oplus S(k_{i+1})$.
		
		Let $L=d_{k_2}$, and set 
		\[
		R=L+\frac{N+(i-1)M}{2}.
		\]
		Using the fact that $R>N$ from Lemma \ref{lem6}, we already have the following relations for the elements of $S(k_2)$ 
		\[
		\lambda_L=\lambda_{L+1},\ldots,\lambda_{R}=\lambda_{R+1},
		\]
		and from this we derive the following relations for the elements of $S(k)$ 
		\[
		\lambda_L=\lambda_{L+1},\ldots,\lambda_{R}=\lambda_{R+M+1}.
		\]
		
		We must show the following condition
		\[
		\lambda_{R}=\lambda_{R+M+1},\ldots,\lambda_{R+M/2}=\lambda_{R+M/2+1}.
		\]
		To do this we note that, with $R{>}N$, we can ignore the sequence $S\circ a^*(k_{i+1}{-}1)$	at the start of $S(k)$, remove any excess copies of the sequence $S(k_{i+1})$, and get that this condition is equivalent to having
		\begin{equation} \label{eq case1}
		R{+}\frac{M}{2}{-}N \mod{M}=
		\begin{cases}
		d_{k_{i+1}}, &\mbox{if } i \mbox{ is even},\\
		d_{k_{i+1}}+\frac{|S(k_{i+1})|}{2}, &\mbox{if } i \mbox{ is odd}.
		\end{cases}
		\end{equation}
		Substituting in for $R$, $M$, and $N$, the left hand side of Equation (\ref{eq case1}) becomes
		\[
		d_{k_2}{+}\frac{|S(k_2)|}{2}{+}\frac{|S(k_{i+1})|}{2}{-}|S\circ a(k_{i+1}{-}1)|\mod{|S(k_{i+1})|}.	
		\]
		Recall that $d_{k_2}=\ldots=d_{2k_i-1}=d_{k_{i+1}}{+}d_{k_{i+1}{-}1}$ and
		\[
		|S(k_2)|=|S\circ a(k_{i+1}{-}1)|+(i{-}1)|S(k_{i+1})|,
		\]
		and so
		\[
		\begin{aligned}
		&R{+}\frac{M}{2}{-}N \mod{M}=\\
		&d_{k_{i+1}}+d_{k_{i+1}{-}1}+\frac{i|S(k_{i+1})|}{2}-\frac{|S\circ a(k_{i+1}{-}1)|}{2}\mod{M}.
		\end{aligned}
		\]
		If $i$ is even then this becomes
		\[
		d_{k_{i+1}}+d_{k_{i+1}{-}1}-\frac{|S\circ a(k_{i+1}-1)|}{2}\equiv d_{k_{i+1}} \mod{|S(k_{i+1})|},
		\]
		which is true by Lemma \ref{lem2}.
		
		If $i$ is odd, then we have 
		\[
		\begin{aligned}
		&d_{k_{i+1}}+d_{k_{i+1}{-}1}+\frac{|S(k_{i+1})|}{2}-\frac{|S\circ a(k_{i+1}-1)|}{2}\equiv\\
		&d_{k_{i+1}}+\frac{|S(k_{i+1})|}{2} \mod{|S(k_{i+1})|},
		\end{aligned}
		\]
		which is again true by Lemma \ref{lem2}. So we have that the $d_{k}$-th circular shift of $S(k)$ is palindromic, and the induction holds.

		\noindent (\emph{ii})
		The proof for the case when $k=k_1$ is odd is equivalent to the even case by Lemma \ref{lem new}, and Remark \ref{rem new},

		This concludes the proof of Proposition \ref{lem main}.
		
	\end{proof}

	\subsection{Final proof of Theorem \ref{thm main}} \label{ssec final proof}
	In this subsection we finalise the proof of Theorem \ref{thm main}. We start with the following Lemma, coming from \cite{cus1}.
	
	\begin{lemma} \label{lem 1st in proof}
		For each $k>1$ we have some $n$ such that
		\[
		S_{A,B}(k)=A^{\alpha_1}B^{\beta_1}\ldots A^{\alpha_n}B^{\beta_n}.
		\]
		From \cite{cus1} we have that if $A=(a,a)$ and $B=(b,b)$ then either $\alpha_i=1$ and $\beta_i\geq1$ for all $i$, with at least one $\beta_i>1$, or the opposite.
	\end{lemma}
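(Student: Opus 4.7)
The plan is to split the lemma into two parts and attack them separately. The first assertion---that $S_{A,B}(k)$ admits a decomposition of the form $A^{\alpha_1} B^{\beta_1} \cdots A^{\alpha_n} B^{\beta_n}$---will be proved by induction on $k$ using the recursive formulas of Proposition~\ref{seq def}. The second assertion, which specialises to $A=(a,a)$ and $B=(b,b)$ and gives the Cohn-type dichotomy on the exponents, will simply be quoted from Cusick--Flahive \cite{cus1} as stated.

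For the base case $k=2$ we have $S_{A,B}(2) = A \oplus B$, which is already in the claimed form with $n=1$ and $\alpha_1 = \beta_1 = 1$. For the inductive step, suppose every $S(m)$ with $2 \le m < k$ can be written as a concatenation of unit copies of $A$ and $B$. Both recursions in Proposition~\ref{seq def}, namely $S(2j) = S(j) \oplus S\circ a(j)$ and $S(2j{-}1) = S\circ a^{*}(j{-}1) \oplus S(j)$, produce $S(k)$ as a concatenation via $\oplus$ of sequences with strictly smaller index; each of these is already a word in the letters $A$ and $B$ by the inductive hypothesis (together with the base cases $S(0)=A$ and $S(1)=B$). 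Collapsing consecutive identical letters into maximal blocks then yields the alternating presentation $A^{\alpha_1} B^{\beta_1} \cdots A^{\alpha_n} B^{\beta_n}$, where $\alpha_1$ or $\beta_n$ is permitted to vanish should $S(k)$ happen to begin with a $B$-block or end with an $A$-block.

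For the second assertion, once we specialise to $A = (a,a)$ and $B = (b,b)$, the sequences $S_{A,B}(k)$ are precisely the Cohn words comprising the graph of general Markov sequences $\mathcal{G}_{(1,1),(2,2)}$. Cohn's structural theorem, as presented in \cite{cus1}, asserts exactly the dichotomy stated in the lemma: all $\alpha_i$ equal $1$ with $\beta_i \geq 1$ and at least one $\beta_i > 1$, or the symmetric statement with the roles of $A$ and $B$ exchanged. We invoke this result directly rather than reproduce its proof.

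No real obstacle arises in the first part---the induction is a formal consequence of Proposition~\ref{seq def} together with the fact that $\oplus$ preserves the property of being a word in $A$ and $B$. The substantive content lies entirely in the second part, Cohn's theorem, whose proof depends on a delicate analysis of the graph $\mathcal{G}_{(1,1),(2,2)}$ and is imported wholesale from \cite{cus1}.
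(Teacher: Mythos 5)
Your proposal is consistent with the paper, which in fact offers no proof of this lemma at all: it is stated as "coming from \cite{cus1}," so the entire content is imported from Cusick--Flahive, exactly as you do for the second assertion. Your inductive argument for the first assertion (that $S_{A,B}(k)$ is a word in the letters $A$ and $B$, via the recursions of Proposition~\ref{seq def} and the base cases $S(0)=A$, $S(1)=B$, $S(2)=A\oplus B$) is correct and is a genuine, if easy, supplement to what the paper provides. Your caveat about a vanishing $\alpha_1$ or $\beta_n$ is also warranted and worth keeping: for instance $S(5)=S\circ a^{*}(2)\oplus S(3)=B\oplus A\oplus A\oplus B$ begins with a $B$-block, so the displayed form in the lemma only holds literally if one reads the word cyclically or allows $\alpha_1=0$; the paper glosses over this.
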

	
	\begin{definition}
		Let $\Lambda=\overline{\Lambda}=(\lambda_1,\ldots,\lambda_m)$ for some positive $m$. Define the half sequences $\lfloor\Lambda\rfloor$ and $\lceil\Lambda\rceil$ by
		\[
		\lfloor\Lambda\rfloor=(\lambda_1,\ldots,\lambda_{\lfloor m/2\rfloor})\ \ \mbox{and}\ \ \lceil\Lambda\rceil= (\lambda_{\lceil m/2\rceil},\ldots,\lambda_m).
		\]
	\end{definition}
	\begin{remark}
		Clearly, $\overline{\lfloor\Lambda\rfloor}=\lceil\Lambda\rceil$.
	\end{remark}
	
	\begin{proof}[Proof of Theorem \ref{thm main}]
		Let $d_n$ be odd. For $A=(a,a)$, $B=(b,b)$, and $\lambda_1=\lambda_2\in\{a,b\}$ we have that $(\lambda_1)\Lambda_{\lceil d_n/2\rceil+1}\ldots\Lambda_{\lfloor d_n/2\rfloor-1}(\lambda_2)$ is palindromic by Lemma \ref{lem 1st in proof} and Proposition \ref{lem main}. Replacing $A$ and $B$ in $\Lambda_{\lceil d_n/2\rceil+1}\ldots\Lambda_{\lfloor d_n/2\rfloor-1}$ with any two palindromic sequences does not change this fact, neither does letting $\lambda_1=\lceil \Lambda\rceil$, $\lambda_2=\lfloor \Lambda\rfloor$ for any palindromic $\Lambda$.
		
		If $d_n$ is even, then $\Lambda_{d_n/2}=\Lambda_{-1+d_n/2}$, by Lemma \ref{lem 1st in proof} and Proposition \ref{lem main}. The result follows as a corollary of Lemma \ref{lem 1st in proof}.		
	\end{proof}

	\bibliographystyle{plain}
	\bibliography{biblio}
	
	\vspace{.5cm}
	
\end{document}